\journal{Optimal Control Applications and Methods}
\newcommand{\one}{\mathbf{1}}
\newcommand {\R}{\mathbb{R}}
\newcommand{\I}{\mathcal{I}}
\newcommand{\iPhi}{\Phi_{\mathrm{isto}}}
\newcommand{\N}{\mathbb{N}}
\renewcommand{\v}{\bar{v}}
\newcommand{\M}{\bar{M}}
\newcommand{\D}{\Delta}
\newcommand{\K}{\mathcal{K}}
\begin{document}

\title{An Efficient Mixed-Integer Formulation and an Iterative Method for Optimal Control of Switched Systems Under Dwell Time Constraints}

\author[LeuvenMECO]{Ramin Abbasi-Esfeden}
\author[Freiburg]{Armin Nurkanovic}
\author[Freiburg]{Moritz Diehl}
\author[LeuvenESAT]{Panagiotis Patrinos}
\author[LeuvenMECO,LeuvenFland]{Jan Swevers}

\authormark{Abbasi-Esfeden \textsc{et al.}}
\titlemark{An Efficient Mixed-Integer Formulation and an Iterative Method for Optimal Control of Switched Systems Under Dwell Time Constraints}

\address[LeuvenMECO]{\orgdiv{Department of Mechanical Engineering}, \orgname{KU Leuven}, \orgaddress{\state{Leuven}, \country{Belgium}}}
\address[LeuvenFland]{\orgdiv{Department of Mechanical Engineering}, \orgname{Flanders Make@KU Leuven}, \orgaddress{\state{Leuven}, \country{Belgium}}}
\address[Freiburg]{\orgdiv{Department of Microsystems Engineering (IMTEK)}, \orgname{University of Freiburg}, \orgaddress{\state{Freiburg}, \country{Germany}}}
\address[LeuvenESAT]{\orgdiv{Department of Electrical Engineering (ESAT)}, \orgname{KU Leuven}, \orgaddress{\state{Leuven}, \country{Belgium}}}

\corres{Ramin Abbasi Esfeden. \email{ramin.abbasiesfeden@kuleuven.be}}

\presentaddress{Celestijnenlaan 300, 3001 Leuven, Belgium.}

\fundingInfo{DFG via Research Unit FOR 2401, project 424107692, 504452366 (SPP 2364), and 525018088, by BMWK via 03EI4057A and 03EN3054B. The project has also received funding from the European Union’s Horizon 2020 research and innovation program under the Marie Skłodowska-Curie agreement No.~953348.}

\abstract[Abstract]{This paper presents an efficient Mixed-Integer Nonlinear Programming (MINLP) formulation for systems with discrete control inputs under dwell time constraints. By viewing such systems as a switched system, the problem is decomposed into a Sequence Optimization (SO) and a Switching Time Optimization (STO)---the former providing the sequence of the switched system, and the latter calculating the optimal switching times. By limiting the feasible set of SO to subsequences of a master sequence, this formulation requires a small number of binary variables, independent of the number of time discretization nodes. This enables the proposed formulation to provide solutions efficiently, even for large numbers of time discretization nodes. To provide even faster solutions, an iterative algorithm is introduced to heuristically solve STO and SO. The proposed approaches are then showcased on four different switched systems and results demonstrate the efficiency of the MINLP formulation and the iterative algorithm.}

\keywords{Optimal Control, Switched Systems, Dwell Time Constraints, Mixed-Integer Nonlinear Programming.}

\jnlcitation{\cname{%
\author{Abbasi Esfeden R.},
\author{Nurkanovic A},
\author{Diehl M},
\author{Patrinos P}, and
\author{Swevers J}}.
\ctitle{An Efficient Mixed-Integer Formulation and an Iterative Method for Optimal Control of Switched Systems Under Dwell Time Constraints.} \cjournal{\it J Optim Contr Appl Met.} \cvol{00}.}

\maketitle

\renewcommand\thefootnote{}
\footnotetext{\textbf{Abbreviations:} STO, Switching Time Optimization; SO, Sequence Optimization; MINLP, Mixed-Integer Nonlinear Program; MDT, Minimum Dwell Time.}

\renewcommand\thefootnote{\fnsymbol{footnote}}
\setcounter{footnote}{1}

\section{Introduction}\label{sec:introduction}

Systems with discrete control inputs are common in real-world applications. However, due to the discrete nature of the control inputs, solving optimal control problem for these systems is challenging. Examples of such problems can be found in the benchmark library by Sager \cite{lee_benchmark_2012}. A common approach is to first discretize the continuous time problem and then optimize it using Mixed-Integer Nonlinear Programming (MINLP) techniques \cite{belotti_mixed-integer_2013}. However, for problems where a fine time discretization is necessary, this usually leads to a large number of integer variables and, consequently, high computation times. Another aspect of real-world applications that adds to the complexity of the problem are constraints known as Minimum Dwell Time (MDT) constraints.
They put a lower bound on the duration between two switches of the discrete control input and are often necessary due to physical limitations. Within the usual MINLP framework, formulation of such constraints is not trivial, often introducing additional integer variables, further complicating the original MINLP.

An alternative and efficient method is to approximate the discrete control inputs based on their continuous relaxation. One example is the Combinatorial Integral Approximation (CIA) \cite{sager_combinatorial_2011}. The properties of such approximation have been investigated in literature \cite{manns_relaxed_2021, bestehorn_combinatorial_2022}. Within CIA, simple MDT constraints can be managed during the approximation step of discrete inputs on a fixed time grid. However, this leads to lower accuracy of the integer approximation and a larger optimality gap \cite{zeile_mixed-integer_2021}, which for some systems can lead to poor outcomes \cite{abbasi-esfeden_iterative_2023}.

Systems with discrete control inputs can alternatively be analyzed as switched dynamical systems. A switched dynamical system consists of a set of subsystems and a schedule that indicates the active subsystem at each moment. The schedule includes a sequence of modes, each corresponding to an active subsystem, along with a set of switching times that govern transitions between modes. The optimal control of the original system now includes selecting the active subsystem within each mode and the optimal duration, or dwell time, of each mode. Due to this structure, the formulation of MDT constraints for a switched system is arguably more intuitive.

Switching Time Optimization (STO) is an approach for switched systems where for a fixed sequence of modes, the duration of each mode is optimized simultaneously with all the other optimization variables and constraints. Although fixing the sequence of modes is a major limitation of the STO problem, in the absence of MDT constraints, a reduction of some mode durations to zero can allow the system to visit different orders of modes \cite{stellato_second-order_2017}. However, the inclusion of MDT constraints will prevent such mode collapses. The quality of the solution will now depend critically on the initial fixed sequence.

Sequence Optimization (SO) is the problem of obtaining a sequence of modes for a switched system. This problem has been studied under different names, such as mode scheduling \cite{axelsson_gradient_2008, wardi_controlled_precision_2012, xuping_xu_optimal_2000}. These methods attempt to generate a sequence based on the sensitivity of the cost function to the insertion of new modes. However, the optimality conditions of mode scheduling become inapplicable when infinitesimal variations are prohibited by MDT constraints \cite{ali_optimal_2016}.

In this paper, we propose a MINLP formulation that combines SO and STO while incorporating MDT constraints. This formulation requires a small number of binary variables, independent of the number of time discretization nodes, which enables efficient solutions even for large number of time discretization nodes. Furthermore, an algorithm is suggested to solve SO and STO iteratively and efficiently.

Section~\ref{sec:formulation} starts by introducing the switched system we are considering and then introduces the STO and SO formulations. Section~\ref{sec:dtconstraints} explains how MDT constraints are handled. Section~\ref{sec:isto} introduces the Iterative Switching Time Optimization (ISTO) algorithm, and Section~\ref{sec:results} applies the proposed MINLP formulation and ISTO to well-known examples of switched systems.

{\bf Notation:} We will use $[a \,..\, b]$ for the integer interval between $a$ and $b$ and $[n]$ as a shorthand for $[1\,..\,n] = \{1,2,\ldots, n\}$.
\section{PROBLEM FORMULATION}
\label{sec:formulation}
\subsection{Discrete Input Systems}
\label{sec:switched}

The systems we are considering are described by the following Ordinary Differential Equation (ODE) for almost all $t \in [0,\, t_f]$:
\begin{align}
\dot{x}(t) &= f\big(x(t), u(t), \mathrm{v}(t)\big), 
\label{originalsys}
\end{align}
with  $x(t) \in \R^{n_x}$, $u(t) \in \R^{n_u}$, and $\mathrm{v}(t) \in \{ 1, 2, \ldots, n\}$ \footnote{Although \eqref{originalsys} introduces the discrete control input taking its values from $\{1,2,\ldots, n\}$, the formulation can easily be extended to more general discrete control inputs $v'(t)\in \{v'^{(1)}, v'^{(2)}, \ldots, v'^{(n)}\}$, with $v'^{(i)} \in \R^{n_{v'}}$ for $i \in [n]$. We only need to introduce a new input $v$ by $v(v'(t)) = i$ if $v'(t) = v'^{(i)}$ for $t \in [t_0, t_f]$, to transform the problem into \eqref{originalsys}. Therefore, the numerical results are formulated as the more general case $v(t)\in \{v^{(1)}, v^{(2)}, \ldots, v^{(n)}\}$.}. Function $\mathrm{v}$ is a piecewise constant scalar function with $n$ possible values. Assuming a finite number of switches, $\mathrm{v}$ as a piecewise constant function partitions the interval $[0, \, t_f]$ into a finite sequence of intervals, over which it has a constant value. Let $[t_k, \,t_{k+1})$ be the $k$th interval and define $v_k := \mathrm{v}(t), \forall t \in [t_k, \,t_{k+1})$. Each constant $v_k$ defines a subsystem with fixed discrete control inputs over $[t_k, \,t_{k+1})$ as follows:
\begin{align}
\dot{x}(t) = f(x(t), u(t), v_k). \label{switchedsys}
\end{align}
We will refer to the $[t_k, \,t_{k+1})$ interval as the $k$th \emph{mode} and define $w_k := t_{k+1} - t_{k}$ as the \emph{dwell time} of the mode $k$. The system \eqref{switchedsys} defines the subsystems of a switched system with $\mathrm{v}$ governing the switching from one subsystem to another. We can represent a switched system with $M$ modes as follows:
\begin{center}
\begin{tikzpicture}[>=latex]
\draw(0,0) -- (4,0);
\draw(5,0) -- (6,0);
\draw[thick, dotted](4,0) -- (5,0);
\foreach \x in {0, 2, 4, 5, 6}{
\draw[thick](\x,-0.15) -- (\x,0.15);
}
\node at (0, -0.7) {$t_1 = 0$};
\node at (2, -0.7) {$t_2$};
\node at (4, -0.7) {$t_3$};
\node at (5, -0.7) {$t_M$};

\node at (0, 0.4) {$x_1$};
\node at (2, 0.4) {$x_2$};
\node at (4, 0.4) {$x_3$};

\node[above] at (1,0.1) {$f(\cdot, \cdot, v_1)$};
\node[above] at (3,0.1) {$f(\cdot, \cdot, v_2)$};
\node[above] at (5.5,0.1) {$f(\cdot, \cdot, v_M)$};

\draw[<->] (0,-0.3) -- (2,-0.3);
\node[below] at (1,-0.3) {$w_1$};

\draw[<->] (2,-0.3) -- (4,-0.3);
\node[below] at (3,-0.3) {$w_2$};

\draw[<->] (5,-0.3) -- (6,-0.3);
\node[below] at (5.5,-0.3) {$w_M$};

\node[left] at (0,0) {$0$};
\node[right] at (6,0) {$t_f$};

\end{tikzpicture}
\end{center}
where $x_k := x(t_k)$. The control input $\mathrm{v}$ can be characterized using the sequence of its values $v = (v_1, v_2, \ldots, v_M) \in [n]^M$ and the dwell time of each mode $w = (w_1, w_2, \ldots, w_M)^\top\!\in\R_{\geq 0}^M$. 

For a given $v$, MDT constraints put bounds on the dwell times $w$. The restriction may be on the dwell time of a single mode or on the total dwell time of multiple consecutive modes. Let $I \subset [M]$ be a set of mode indices that are subject to a MDT constraint. In this paper, we will consider MDT constraints in the following form:
\begin{align}
\sum_{k \in I} w_k \geq \D, \label{eq:dtc}
\end{align}
with $\D \geq 0$. We will refer to $I$ as a \emph{segment of $v$} for the constraint \eqref{eq:dtc}. As a MDT constraint typically relates to more than one segment of $v$, we collect all the affected segments in a set $\I(v) = \{ I_1, I_2, \ldots, I_{n_\I} \}$, and the MDT constraint now reads as:
\begin{alignat}{2}
\sum_{k \in I} w_k \geq \D, && \qquad \forall I \in \I(v), \label{eq:dtc:g} 
\end{alignat}
We are now prepared to define the optimal control problem for the system described by \eqref{originalsys}. To make sure the problem is feasible, we make the following assumption on the dwell time constraints:
\begin{assumption}
\label{assum:dwell}
For any dwell time constraint in the form \eqref{eq:dtc:g}, we have $0<\Delta < t_f$.
\end{assumption}
Having access to an integrator for the dynamics in \eqref{switchedsys}, let $F(x_k, u_k, v_k, w_k)$ represent the function that transitions the system state $x_k \in \R^{n_x}$ from the beginning to the end of mode $k$ based on the control input trajectory $u_k = (u_{k,1}^\top, u_{k, 2}^\top, \ldots, u_{k, N_k}^\top)^\top \in \R^{N_kn_u}$, with $u_{k,\cdot} \in \R^{n_u}$, and the scalar dwell time of $kth$ mode $w_k \in \R_{\geq 0}$. Note that $N_k$ determines the number of parameters that parametrizes the trajectory $u_k$ over the $k$th mode. Similarly, let $L(x_k, u_k, v_k, w_k)$ be the value of the objective function for the mode $k$. Note that for any mode $k \in [M]$, $x_k \in \R^{n_x}, u_k \in \R^{N_kn_u}$, and $ v_k \in [n]$, we have $F(x_k, u_k, v_k, 0) = x_k $ and $L(x_k, u_k, v_k, 0) = 0$, i.e., modes with zero dwell time do not affect the system or the objective. 

Let the system be under a MDT constraint expressed by \eqref{eq:dtc:g}. The Switching Time Optimization (STO) problem for \eqref{originalsys} with its discrete control input characterized by the sequence $v$ is as follows:
\begin{subequations}
\begin{alignat}{2}
\Phi(v) = \min_{\substack{x, u, w}} &\sum_{k=1}^M L(x_k, u_k, v_k, w_k) + E(x_{M+1}) \\
\text{s.t.}\hspace{0.9cm} x_1 &= \bar{x}_1,\\
x_{k+1} &= F(x_k, u_k, v_k, w_k), && \hspace{-1cm}\forall k \in [M],   \\
\sum_{k=1}^M w_k &= t_f, \\
\sum_{k \in I} w_k &\geq \D, &&\hspace{-1cm} \forall I \in \I(v) \label{sto:dtc},
\end{alignat}
\label{sto}%
\end{subequations}
where $x = (x_1^\top, x_2^\top, \ldots, x_{M+1}^\top) \in \R^{(M+1)n_x}$, and $u = (u_1^\top, u_2^\top, \ldots, u_M^\top) \in \R^{MN_kn_u}$
with $x_k \in \R^{n_x}$, $v_k \in [n]$, $w_k \in \R_{\geq 0}$, and $u_k \in \R^{N_k n_u}$.  Problem \eqref{sto} is a Nonlinear Program (NLP), with no integer variables, as $v$ is fixed. The Sequence Optimization (SO) problem is written as:
\begin{align}
\min_{M \in \N, v \in [n]^M} \Phi(v). \label{so}
\end{align}
Problem \eqref{so} is a combinatorial optimization problem, which is not tractable in this general form. A change in $v$ can change all the elements of $\I(v)$ for the MDT constraint \eqref{sto:dtc}, leading to the appearing and vanishing of inequalities. This prevents us from applying a branch-and-bound algorithm, as it is not trivial to provide a bounding function for \eqref{so}. On the other hand, not including the dwell times in the computation of the lower bound will give a weak bounding function \cite{Clausen2003BranchAB}, as the lower bound will not agree with the objective function of a feasible solution of \eqref{so}. More importantly, the dimension of $v$, i.e., $M$, is not known beforehand.

To circumvent these difficulties, we will limit the feasible set of \eqref{so} to the set of subsequences of a \emph{master sequence} $\v \in [n]^{\M}$ for some $\M \in \N$. A binary vector $b \in \{0,1\}^{\M}$ can be associated with each subsequence of $\v$, such that $b_j = 1$ if and only if the mode $\v_j$ is in the corresponding subsequence. Leveraging the fact that modes with zero dwell times do not affect the system or the objective, it is possible to incorporate the master sequence into STO, and write SO only in terms of $b$ as follows:
\begin{align}
\min_{b \in \{0,1\}^{\M}} \phi(b), \label{so:final}
\end{align}
where $\phi(b)$ is similar to \eqref{sto} with its sequence $v$ fixed to the master sequence $\v = (\v_1, \v_2, \ldots, \v_{\M})$, and it is containing a constraint that sets $w_k = 0$ if $b_k = 0$ for all $k \in [\M]$, i.e., it reads as:
\begin{subequations}
\begin{alignat}{2}
\phi(b) = \min_{\substack{x, u, w}} &\sum_{k=1}^{\M} L(x_k, u_k, \v_k, w_k) + E(x_{\M+1}) \\
\text{s.t.}\hspace{0.9cm} x_1 &= \bar{x}_1,\\
x_{k+1} &= F(x_k, u_k, \v_k, w_k), &&\hspace{-1cm} \forall k \in [\M],   \\
\sum_{k=1}^{\M} w_k &= t_f, \\
0 \leq  w_k &\leq b_k T, &&\hspace{-1cm} \forall k \in [\M], \label{sto:dtc:b}\\
w \in &~S(b), \label{sto:dtc:placeholder}
\end{alignat}
\label{sto:binary}%
\end{subequations}
where $T \geq t_f$ is any upper bound for dwell times. Note that \eqref{sto:dtc:placeholder} is a placeholder for the MDT constraint, as \eqref{sto:dtc} is generally inconsistent with \eqref{sto:dtc:b}. In the following section, we explain how to modify \eqref{sto:dtc} and place it back in \eqref{sto:binary} to arrive at a well-posed formulation of $\phi(b)$.

\subsection{Minimum Dwell Time Constraints}
\label{sec:dtconstraints}

By fixing a master sequence $\v$, we were able to limit the feasible set of SO to the subsequences of $\v$. This was done through the binary vector $b$ and the constraint \eqref{sto:dtc:b}. If $b_k = 0$ for the $k$th mode, its dwell time shrinks to zero by \eqref{sto:dtc:b}, and the mode becomes ineffective on the system and the objective. However, to completely remove the impact of a mode, we need to look into its influence on the MDT constraints as well. We explain this via an example. Consider a system with the following master sequence for its discrete control input:
\begin{align}
\v = (2, 3, 2, 3, 2), \label{example:master}
\end{align}
and let there only be a minimum dwell time constraint on modes with the discrete control input of $2$, i.e., modes $1$, $3$, and $5$. In case $b_k = 1$ for all $k \in [5]$, \eqref{sto:dtc} becomes
\begin{align}
w_1 \geq \D, \; w_3 \geq \D, \; w_5 \geq \D. \label{example:dtc1}
\end{align}
However, in case $b_2 = 0$ and $b_k = 1$ for $k \in \{1,3,4,5\}$, corresponding to the subsequence $(2, 2, 3, 2)$ of $\v$, we want \eqref{sto:dtc} to become
\begin{align}
w_1 + w_3 \geq \D, \; w_5 \geq \D.
\end{align}
Setting $b_2 = 0$ needs \eqref{sto:dtc} to treat the second mode as nonexistent and join the adjacent modes. Furthermore, if $w_1 + w_3 \geq \D$, then $w_1 \geq \D$ and $w_3 \geq \D$ should not be imposed to avoid overconstraining the modes $1$ and $3$. 
We need a systematic way to handle such cases, and this can be done using the master sequence.

By a quick examination of $\v$ in \eqref{example:master}, we can foresee three cases of $b$ in which the modes of interest that were previously separated need to be joined:
\begin{subequations}
\label{example:cases}
\begin{align}
b^{(1)} &= (1, 1, 1, 0, 1), \quad b^{(2)} = (1, 0, 1, 1, 1), \quad b^{(3)} = (1, 0, 1, 0, 1),
\end{align}
\end{subequations}
corresponding to the subsequences $(2,3,2,2)$, $(2,2,3,2)$, and $(2,2,2)$, respectively. Note that these cases are based on whether $b_2 = 0$ and/or $b_4 = 0$ and are the only cases we need to investigate, as we only need to consider the largest possible joined segments of $\v$ for a MDT constraint. However, which segments of $\v$ these are will depend on $b$. 

To make this dependency explicit, first we need to extend the definition of $\I$ to include not only segments of $\v$ such as $\{1\}$, $\{3\}$, and $\{5\}$, but also its other segments due to the possible joining of modes. For \eqref{example:master} and its minimum dwell time constraint, this becomes
\begin{align}
\I = \{ \{1\}, \{3\}, \{5\}, \{3,5\}, \{1,3\}, \{1,3,5\} \}. \label{example:segments}
\end{align}
Elements $\{3,5\}$, $\{1,3\}$ and $\{1,3,5\}$ are indices of modes that are joined if $b$ takes the values of $b^{(1)}$, $b^{(2)}$, and $b^{(3)}$, respectively. Therefore, the elements of $\I$ that will actually become subject to the corresponding MDT constraint now depend on $b$. 

Let $z_I \in \{0, 1\}$ denote a binary variable for a segment $I \in \I$ of $\v$ for a MDT constraint that indicates whether the segment $I$ is subject to that MDT constraint or not. For instance $z_{\{1, 3\}}$ for $\{1, 3\} \in \I$ from \eqref{example:segments}, indicates whether two modes of $1$ and $3$ have become adjacent to each other due to the removal of the second mode, i.e., whether we need to put a constraint on $w_1 + w_3$ or not. The value of $z_I$ depends on $b$. Before going into details of this relation, we make the following definitions.
\begin{definition}
We refer to a mode $k$ as \emph{included} in a subsequence of $v$ if and only if $b_k = 1$. We refer to a segment $I$ of a sequence $v$ as \emph{active} under a MDT if and only if $z_I = 1$.
\end{definition}
Based on the definition of $z_I$, we see that $z_I$ is determined by $b$ through the following conditions: $z_I = 1$ if and only if
\begin{subequations}
\label{so:conditions}
\begin{alignat}{3}
&\phantom{and} \qquad b_i &&= 1, &&\qquad \forall i \in I, \label{so:condition1}\\
&\text{and} \qquad b_j &&= 0, &&\qquad \forall j \in [\underline{I}\, ..\, \overline{I}]\setminus I, \label{so:condition2}\\
&\text{and} \qquad z_{I'} &&= 0,  &&\qquad \forall I' \in \I ,\;I' \supset I. \label{so:condition3}
\end{alignat}
\end{subequations}
where we have used $\underline{I}$ and $\overline{I}$ to stand for $\min I$ and $\max I$ respectively.
Condition \eqref{so:condition1} states that an active segment should have all its modes included. Condition \eqref{so:condition2} states that all the modes that are located between modes of interest should not be included. Condition \eqref{so:condition3} states that any other segment that contains the active segment $I$ should not be active. Note that the set of indices of \eqref{so:condition2} and \eqref{so:condition3} might be empty, in which case they are satisfied vacuously.

Condition \eqref{so:condition1} is necessary because if a segment misses one of its modes, the MDT constraint is no longer applicable to that segment. Condition \eqref{so:condition2} is necessary because including any mode not included in $I$ separates the modes in $I$, making the MDT constraint inapplicable again. Condition \eqref{so:condition3} is necessary because if a segment that contains $I$ is active, then $I$ should not be active, as this will over-constrain the larger segment. These conditions are sufficient, as they ensure that always the largest segments of joined modes are active. 

As an example, we apply these conditions to two segments of \eqref{example:segments}. We have $z_{\{1,3\}} = 1$ if and only if:
\begin{alignat*}{2}
b_i &= 1, \qquad&&\forall i \in \{1, 3\},\\
b_j &= 0, \qquad&&\forall j \in \{2\}=\{1,2 ,3\}\setminus \{1, 3\}, \\
z_{\{1,3,5\}} &= 0.
\end{alignat*}
And $z_{\{1,3, 5\}} = 1$ if and only if:
\begin{subequations}
\label{example-dtc}
\begin{alignat}{2}
b_i &= 1, \qquad&&\forall i \in \{1, 3, 5\},\\
b_j &= 0, \qquad&&\forall j \in \{2, 4\}.
\end{alignat}
\end{subequations}
Note that in \eqref{example-dtc}, the third condition is vacuously satisfied. The logical conditions \eqref{so:conditions} can be imposed with the following set of linear inequalities:
\begin{subequations}
\label{so:condition:ineq}
\begin{alignat}{2}
z_I &\leq b_i, && \forall i \in I, \label{so:condition1:ineq}\\
z_I &\leq 1- b_j, &&\forall j \in [\underline{I}\, ..\, \overline{I}]\setminus I, \label{so:condition2:ineq}\\
z_I &\leq 1- z_{I'}, && \forall I' \in \I ,\;I' \supset I, \label{so:condition3:ineq} \\
z_I &\geq 1 - \Big[ \sum_{i \in I} (1 - b_i) + \sum_{j\in[\underline{I} .. \overline{I}] \setminus I}b_j  + \sum_{I' \in \I, I' \supset I}z_{I'}\Big]. \label{so:condition0:ineq}
\end{alignat}
\end{subequations}
Theorem~\ref{thm:equivalence} shows that \eqref{so:condition:ineq} implies that $z_I = 1$ is equivalent to \eqref{so:conditions}, and Theorem~\ref{thm:relaxed:z} shows that we can relax $z_I \in \{0, 1\}$ to $z_I \in [0, 1]$ as \eqref{so:condition:ineq} determines a unique value for $z_I$ in $\{0, 1\}$.
\begin{theorem}
Inequalities \eqref{so:condition:ineq} imply $z_I = 1$ is equivalent to \eqref{so:conditions}.
\label{thm:equivalence}
\end{theorem}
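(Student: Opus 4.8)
The plan is to prove the biconditional by establishing its two implications separately, as a pointwise statement: fix any $b\in\{0,1\}^{\M}$ and any assignment of the segment variables that satisfies \eqref{so:condition:ineq} (recall $b$ is binary and $z_{I'}\in[0,1]$ for every segment $I'$), and show that $z_I=1$ holds exactly when \eqref{so:condition1}, \eqref{so:condition2}, \eqref{so:condition3} all hold.

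For the forward direction, $z_I=1\Rightarrow\eqref{so:conditions}$, I would substitute $z_I=1$ into \eqref{so:condition1:ineq}, \eqref{so:condition2:ineq}, and \eqref{so:condition3:ineq} in turn. The first gives $1\le b_i$ for all $i\in I$, hence $b_i=1$ since $b_i\le 1$; the second gives $1\le 1-b_j$, hence $b_j=0$ for all $j\in[\underline I\,..\,\overline I]\setminus I$; the third gives $1\le 1-z_{I'}$, hence $z_{I'}=0$ for all $I'\in\I$ with $I'\supset I$ (using $z_{I'}\ge 0$). These are precisely \eqref{so:condition1}, \eqref{so:condition2}, and \eqref{so:condition3}; whenever one of these index sets is empty, the corresponding condition is satisfied vacuously and there is nothing to check.

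For the reverse direction, $\eqref{so:conditions}\Rightarrow z_I=1$, I would use \eqref{so:condition0:ineq}. Under \eqref{so:conditions}, each of the three sums in its bracket vanishes term by term --- $\sum_{i\in I}(1-b_i)=0$ by \eqref{so:condition1}, $\sum_{j\in[\underline I\,..\,\overline I]\setminus I}b_j=0$ by \eqref{so:condition2}, and $\sum_{I'\in\I,\,I'\supset I}z_{I'}=0$ by \eqref{so:condition3} --- so \eqref{so:condition0:ineq} reduces to $z_I\ge 1$; combined with $z_I\le b_i\le 1$ from \eqref{so:condition1:ineq} for any fixed $i\in I$ (note $I\neq\emptyset$), this forces $z_I=1$.

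I do not expect a genuine obstacle here: this is the standard linear encoding of an AND gate, with \eqref{so:condition1:ineq}--\eqref{so:condition3:ineq} supplying the only-if direction and \eqref{so:condition0:ineq} the if direction. The only points requiring a little care are (i) invoking the integrality of $b$ and the $[0,1]$ bounds on the $z$-variables when sharpening the inequalities to exact $0/1$ equalities, and (ii) observing that empty index sets make the associated conditions and sums trivial. Note also that this theorem is a pointwise logical equivalence and does not require resolving the mutual dependence among the $z_{I'}$ appearing in \eqref{so:condition3}; that the system \eqref{so:condition:ineq} additionally pins down a unique consistent $0/1$ value for $z_I$ is the separate content of Theorem~\ref{thm:relaxed:z}.
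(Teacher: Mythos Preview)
Your proposal is correct and follows essentially the same route as the paper: the forward direction uses \eqref{so:condition1:ineq}--\eqref{so:condition3:ineq} with $z_I=1$ to recover \eqref{so:condition1}--\eqref{so:condition3}, and the reverse direction plugs \eqref{so:conditions} into \eqref{so:condition0:ineq} to obtain $z_I\ge 1$ and hence $z_I=1$. Your version is simply more explicit about the bookkeeping (integrality of $b$, the upper bound $z_I\le b_i\le 1$ via \eqref{so:condition1:ineq}, empty index sets), which is fine.
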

\begin{proof}
Suppose \eqref{so:condition:ineq} hold. If $z_I = 1$, then \eqref{so:condition1:ineq}, \eqref{so:condition2:ineq}, and \eqref{so:condition3:ineq} entail  \eqref{so:condition1}, \eqref{so:condition2}, and \eqref{so:condition3} respectively. 
Therefore, \eqref{so:condition1:ineq} to \eqref{so:condition3:ineq} capture the necessity of \eqref{so:condition1} to \eqref{so:condition3} for $z_I = 1$. On the other hand, \eqref{so:condition0:ineq} provides their sufficiency. This is because if \eqref{so:condition1} to \eqref{so:condition3} hold, then all the terms in the bracket on the right hand side of \eqref{so:condition0:ineq} become zero, leading to $z_I = 1$.
\end{proof}
\begin{theorem}
\label{thm:relaxed:z}
For all $I \in \I$, inequalities \eqref{so:condition:ineq} with $z_I \in [0, 1]$ imply that $z_I = 0$ if and only if $z_I \neq 1$.
\end{theorem}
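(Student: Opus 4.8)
The statement to prove is really that \eqref{so:condition:ineq} together with the box constraint $z_I \in [0,1]$ forces $z_I \in \{0,1\}$ for every $I \in \I$: the phrasing ``$z_I = 0$ if and only if $z_I \neq 1$'' is just the bookkeeping form of this, since the implication $z_I = 0 \Rightarrow z_I \neq 1$ is trivial and the reverse implication $z_I \neq 1 \Rightarrow z_I = 0$ is exactly the assertion that no fractional value survives. So the plan is to show $z_I \in \{0,1\}$ for all $I \in \I$; the theorem then follows immediately.

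I would argue by strong induction on the segments of $\v$ ordered by strict inclusion — equivalently, by induction on $|I|$ in decreasing order, so that when treating $I$ we may assume $z_{I'} \in \{0,1\}$ for every $I' \in \I$ with $I' \supset I$ (the base case, $I$ maximal, is the instance where this hypothesis is vacuous). Fix such an $I$. If some strictly larger segment is active, i.e.\ there is $I' \in \I$ with $I' \supset I$ and $z_{I'} = 1$, then \eqref{so:condition3:ineq} gives $z_I \le 1 - z_{I'} = 0$, and since $z_I \ge 0$ we get $z_I = 0$. Otherwise, by the induction hypothesis $z_{I'} = 0$ for all $I' \supset I$, so the last sum in the bracket of \eqref{so:condition0:ineq} vanishes and every right-hand side in \eqref{so:condition3:ineq} equals $1$, and the situation is governed entirely by the binary entries $b_i$, $b_j$. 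If $b_i = 1$ for all $i \in I$ and $b_j = 0$ for all $j \in [\underline{I}\,..\,\overline{I}]\setminus I$, then the upper bounds \eqref{so:condition1:ineq}--\eqref{so:condition3:ineq} are each at least $1$ while \eqref{so:condition0:ineq} reads $z_I \ge 1$, so $z_I = 1$. If instead $b_i = 0$ for some $i \in I$, then \eqref{so:condition1:ineq} gives $z_I \le 0$, hence $z_I = 0$; and if $b_j = 1$ for some $j \in [\underline{I}\,..\,\overline{I}]\setminus I$, then \eqref{so:condition2:ineq} gives $z_I \le 0$, hence $z_I = 0$. These cases are exhaustive, so $z_I \in \{0,1\}$, completing the induction.

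The only point requiring care is the well-posedness of the induction: \eqref{so:condition3:ineq} and \eqref{so:condition0:ineq} refer to $z_{I'}$ for larger $I'$, so one must be certain that the containment appearing in \eqref{so:conditions} and \eqref{so:condition:ineq} is strict (as it is in the paper, e.g.\ $\{1,3,5\} \supset \{1,3\}$ in the running example, with no $I'$ strictly containing the maximal segment) and that $\I$ is a finite collection of subsets of $[\M]$. Granted that, the inclusion order is well-founded and there is no circularity; everything else is the short case split on the integer quantities $b_i$, $b_j$ and, via the inductive hypothesis, $z_{I'}$.
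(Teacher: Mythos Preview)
Your proof is correct and follows essentially the same approach as the paper: both argue by well-founded induction on the strict-containment order on $\I$ (the paper phrases this as a maximal-element contradiction on $\I \setminus \hat\I$, you phrase it as strong induction on decreasing $|I|$), and both reduce the inductive step to a case split on whether the conditions \eqref{so:conditions} hold, using \eqref{so:condition1:ineq}--\eqref{so:condition3:ineq} to force $z_I \le 0$ when one fails and \eqref{so:condition0:ineq} to force $z_I \ge 1$ when all hold. The only cosmetic difference is that the paper routes the conclusion through Theorem~\ref{thm:equivalence}, whereas you prove $z_I \in \{0,1\}$ directly.
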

\begin{proof}
To prove the theorem, we need to show that \eqref{so:condition:ineq} implies the following equivalence: $z_I = 0$ if and only if one of the conditions in \eqref{so:conditions} fails to hold. Since, by Theorem~\ref{thm:equivalence}, \eqref{so:conditions} is also equivalent to $z_I = 1$, the theorem follows. 

First, we show that if $I$ is a segment such that the theorem holds for all its supersets $I' \in \I$, then the theorem holds for $I$. Suppose that $I$ is a segment such that for all its supersets $I' \in \I$ the theorem holds for $I'$. Then, if \eqref{so:condition1} does not hold, \eqref{so:condition1:ineq} leads to $z_I = 0$. Similarly, violation of \eqref{so:condition2} with \eqref{so:condition2:ineq} entails $z_I = 0$. If \eqref{so:condition3} is not satisfied, i.e., $z_{I'} \neq 0$, by assumption this is equivalent to $z_{I'} = 1$. By \eqref{so:condition3:ineq}, this implies $z_I = 0$. Therefore, if any of the conditions \eqref{so:conditions} is not satisfied, $z_I = 0$. On the other hand, $z_I = 0$ with \eqref{so:condition0:ineq} implies that at least one of the sums on the right hand side of  \eqref{so:condition0:ineq} is positive, and thus at least one of the conditions of \eqref{so:conditions} is not satisfied. Therefore, if $I$ is a segment such that the theorem holds for all its supersets $I' \in \I$, then the theorem holds for $I$. Next, we need to show that the theorem holds for all $I \in \I$. We will use $\subseteq$ as a partial ordering on $\I$. 

Let $\hat{\I} \subseteq \I$ be the set of all segments such that the theorem holds for them. This set is nonempty because there exists a maximal element $I \in \I$ for which there exists no segment $I' \in \I$ such that $I' \supset I$. Therefore, the condition of the argument above is vacuously satisfied, and the theorem holds for it. We now show that $\I \subseteq \hat{\I}$, resulting in $\hat{\I} = \I$, i.e., the theorem holds for all $I \in \I$.

Suppose $\I \not\subseteq \hat{\I}$. It follows that the set $\I \setminus \hat{\I} \neq \emptyset$ and the theorem does not hold for any of its members. However, $\I \setminus \hat{\I}$ is a finite set that has a maximal element $I$. Since $I$ is the maximal element, there are no $I' \in \I \setminus \hat{\I}$ such that $I' \supset I$. It follows that for any $I' \in \I$ such that $I' \supset I$, it cannot belong to $ \I \setminus \hat{\I}$. It follows that $I' \in \hat{\I}$, and by definition of $\hat{\I}$, the theorem holds for $I'$. Consequently, $I$ is a segment such that for any of its supersets $I' \in \I$ the theorem holds for $I'$. It follows that theorem holds for $I$, contradicting our assumption that the theorem does not hold for any members of $\I \setminus \hat{\I}$. Therefore, $\I \subseteq \hat{\I}$.
\end{proof}
The MDT constraint \eqref{sto:dtc} is now modified using the newly introduced variable $z_I \in [0, 1]$ to the following constraint:
\begin{subequations}
\label{sto:dtc:binary}
\begin{alignat}{2}
\sum_{k \in I} w_k \geq z_I \D, \qquad&& \forall I \in \I(\v).  \\
0 \leq z_I \leq 1 , \qquad&& \forall I \in \I(\v).
\end{alignat}
\end{subequations}
The inequalities \eqref{so:condition:ineq} along with \eqref{sto:dtc:binary} are collected as $G(w, b, z_I) \geq 0$, which will bring MDT constraints back to \eqref{sto:binary}, leading to the following continuous optimization problem:
\begin{subequations}
\begin{alignat}{2}
\phi(b) = \min_{\substack{x, u, \\ z_I, w}} &\sum_{k=1}^{\M} L(x_k, u_k, \v_k, w_k) + E(x_{\M+1}) \\
\text{s.t.}\hspace{0.9cm} x_1 &= \bar{x}_1,\\
x_{k+1} &= F(x_k, u_k, \v_k, w_k), &&\hspace{-1cm} \forall k \in [\M],   \\
\sum_{k=1}^{\M} w_k &= t_f, \\
0 \leq  w_k &\leq b_k T, &&\hspace{-1cm} \label{sto:w:complete}  \forall k \in [\M], \\
G(w, b, z_I) &\geq 0, &&\hspace{-1cm}\label{sto:dtc:complete} \forall I \in \I(\v),
\end{alignat}
\label{sto:binary:complete}%
\end{subequations}
where for all $k \in [M]$, $x_k \in \R^{n_x}$, $u_k \in \R^{N_kn_u}$, and $w_k \in \R_{\geq 0}$. The sequence $\v$ is the fixed master sequence, and $b$ is a binary vector telling which modes of $\v$ are included. Note that for all $I \in \I(\v)$, $z_I \in [0, 1]$, and in principle $z_I$ could be eliminated from the above optimization problem as it is uniquely determined by $b$.

\section{ITERATIVE SWITCHING TIME Optimization}
\label{sec:isto}
The optimization problem $\min_{b \in \{0,1\}^{\M}} \phi(b)$  with $\phi(b)$ defined in \eqref{sto:binary:complete} is intended to find an optimizer $b^*$, indicating an optimal $v^*$ as a subsequence of the master sequence $\v$. It is possible to devise an efficient iterative algorithm, which we will refer to as iterative switching time optimization (ISTO), to provide a good heuristic solution to the STO problem \eqref{sto} and the SO problem \eqref{so}, without introducing binary variables. The ISTO algorithm presented here generalizes a basic version that was introduced in a previous publication\cite{abbasi-esfeden_iterative_2023}, which considered only simple minimum dwell time constraints. Also, here we use a different selection procedure for $v$.

Provided with a master sequence $\v$, evaluation of $\Phi(\v)$ requires solving the continuous optimization problem \eqref{sto}. In the absence of any dwell time constraint, it is easy to suggest a $v^*$ as follows. Let $\Phi_{\mathrm{exMDT}}(\v)$ be the STO problem excluding MDT constraints. Let $\bar{w}$ be the optimal dwelling times for $\Phi_{\mathrm{exMDT}}(\v)$, and $\K= \{k | \bar{w}_k > 0 \}$. Define $\one_\K$ to be the submatrix of the identity matrix $\one$ by keeping the rows $i \in \K$. If we represent a sequence as a column matrix, by identifying the $k$th row of the matrix with the $k$th element of the sequence, then $v^* := \one_\K \v$, with the corresponding optimal dwell times $w^* := \one_\K \bar{w}$, will be a subsequence of $\v$ with $\Phi_{\mathrm{exMDT}}(v^*) = \Phi_{\mathrm{exMDT}}(\v)$ with no zero dwell time for any of its modes. However, as mentioned in the introduction, $\bar{w}$ may no longer be feasible once MDT constraints are added. ISTO is an algorithm with the aim of finding a $v^*$ with $\Phi(v^*)$ close to $\Phi_{\mathrm{exMDT}}(\v)$.
\subsection{ISTO Algorithm}
Since MDT constraints will prevent modes from collapsing to zero duration, we need to soften the constraints \eqref{sto:dtc} by introducing a slack variable $e \in \R_{\geq 0}^{M}$. This modifies \eqref{sto:dtc} to:
\begin{alignat}{2}
\sum_{k \in I} w_k + e_k \geq \D, \qquad&& \forall I \in \I(v), \label{sto:dtc:soft}
\end{alignat}
and adds a quadratic cost $\gamma_0 e^\top e$ to the objective function. Alternatively, a $l_1$-norm cost could also have been used. However, a quadratic cost, by penalizing larger slacks more severly, ensures that violation of MDT constraints are more uniformly distributed among modes, leading to a better performance of the algorithm we experienced for the numerical results. In \eqref{sto:binary:complete}, whether a constraint is enforced or removed was decided through the binary variables $b$ and $z_I$.  The ISTO algorithm implements the same decision using the complementarity constraint $0 \leq e \perp w \geq 0$. This is motivated by \eqref{sto:dtc:soft}, where the values of the two vectors of $w$ and $e$ affect MDT constraints in the following way: setting $e_k = 0$ for all $k \in I$ enforces a constraint, and setting $w_k = 0$ for any $k \in I$ removes that constraint if we remove the corresponding $k$th mode from $v$. As a result, for a sequence $v$ with length $M$, ISTO replaces \eqref{sto} with
\begin{subequations}
\begin{alignat}{2}
\iPhi(v) &=\min_{\substack{x, u \\ w, e}} \sum_{k=1}^{M} L(x_k, u_k, v_k, w_k) + E(x_{M+1}) + \gamma_0 e^T e \\
\text{s.t.}\hspace{0.4cm} &x_1 = \bar{x}_1, \label{isto:dyn}\\
&x_{k+1} = F(x_k, u_k, v_k, w_k), &&\hspace{-2cm} \forall k \in [M],   \\
&\sum_{k=1}^M w_k = t_f, \\
&\sum_{k \in I} w_k + e_k \geq \D, &&\hspace{-2cm}   \forall I \in \I(v), \label{isto:dtc}\\
&0 \leq e \perp w \geq 0. \label{isto:cc}
\end{alignat}
\label{sto:isto}%
\end{subequations}
 The complementarity constraint itself can be incorporated using the penalty function $\sum_{k=1}^{M} \gamma e_k w_k$, resulting in the following continuous optimization problem:
\begin{align*}
\iPhi^{\gamma}(v) = \min_{\substack{x, u \\ w, e}} & \sum_{k=1}^{M} \Big[ L(x_k, u_k, v_k, w_k) + \gamma e_k w_k + \gamma_0 e_k^2\Big] +  E(x_{M+1}) \\[0.2cm]
\text{s.t.}\hspace{0.9cm} &\eqref{isto:dyn} - \eqref{isto:dtc}, \; e \geq 0 , \;w  \geq 0.
\end{align*}
The key idea behind ISTO is as follows. Start by solving $\iPhi^\gamma(\v)$ for a $\gamma \approx 0$. Note that at this point, $\iPhi^\gamma(\v) \approx \Phi_{\mathrm{exMDT}}(\v)$, as the MDT constraints are softened by a small $\gamma_0$ coefficient. Next, increase the penalty parameter $\gamma$, and further decrease $\gamma_0$ to allow modes to collapse more easily. If this causes a mode $k$ to collapse, define $\one_\K$ based on $\K=\{k | \bar{w}_k > 0 \}$ and by representing the sequence $\v$ as a column matrix, update the sequence by $v' = \one_\K \v$. Repeat the process by solving $\iPhi^\gamma(v')$, preferably warm-started from the dwell times of the previous solution. Note that since $v'$ has a different dimension than $\v$, $\iPhi^\gamma(v')$ will have a different structure and different variables than $\iPhi^\gamma(\v)$. The hope is that by slowly increasing $\gamma$, while keeping $\gamma_0$ small, the MDT constraints will be settled either in favor of removing them (by removing the mode) or satisfying them (by setting the slacks to zero) such that we remain close to $\Phi_{\mathrm{exMDT}}(\v)$. Details of the ISTO iterations are provided in Algorithm~\ref{isto}. Note that in Algorithm~\ref{isto}, $\iPhi^\gamma(v)$ is solved multiple times (lines \ref{isto:solve1}, \ref{isto:solve2}, and \ref{isto:solve3}), each time updating the solution vectors $w^*$ and $e^*$. To use ISTO algorithm in Section.~\ref{sec:results}, at line \ref{isto:parameters} of ISTO, we set the initial value of $\gamma$ and $\epsilon$ to $10^{-4}$. $\gamma_0$ is initially set to $1$, and further reduced to $\gamma_0' = 10^{-2}$ in line \ref{isto:gamma}. The penalty growth factor $\theta$ is set to $10$.

Definition~\ref{defn:licq} describes the constraint qualification that is assumed in Lemma~\ref{mpcc-licq} and Theorem~\ref{thm:convergence} to prove the termination of the Algorithm~\ref{isto}.

\begin{algorithm}
\begin{algorithmic}[1]
\State Receive $v^{(1)} = \v = (\v_1, \v_2, \ldots, \v_{\M})$. 
\State $i \rightarrow 1$. 
\State Set the initial values for $\gamma$, $\gamma_0$, $\theta$, and $\epsilon$. \label{isto:parameters}
\State Compute $\iPhi^{\gamma}(v^{(i)})$, receive $w^*$ and $e^*$. \label{isto:solve1}
\While {TRUE}
\If {$\{k | { w^*_k \leq \epsilon \vee e^*_k \geq \epsilon }\} = \emptyset$} \label{isto:end}
\State End.
\Else
\State $\gamma \rightarrow \theta \gamma$, $\gamma_0 \to \gamma_0'$. \label{isto:gamma}
\State Compute $\iPhi^{\gamma}(v^{(i)})$, receive $w^*$ and $e^*$.  \label{isto:solve2}
\If {$\{k |{ w^*_k \leq \epsilon}\} \neq \emptyset$}
\State Let $\K=\{k |{ w^*_k > \epsilon }\}$
\State Define $v^{(i+1)} = \one_\K v^{(i)}$ \label{isto:v}
\State $i \rightarrow i + 1$. \label{isto:i}
\State Set the initial values for $\gamma$, $\gamma_0$.
\State Compute $\iPhi^{\gamma}(v^{(i)})$, receive $w^*$ and $e^*$. \label{isto:solve3}
\EndIf
\EndIf
\EndWhile
\end{algorithmic} 
\caption{Iterative Switching Time Optimization}
\label{isto}
\end{algorithm}

\begin{definition}
\label{defn:licq}
Let $\hat{\xi} = (\hat{x}, \hat{u}, \hat{w}, \hat{e})$ satisfy all constraints of \eqref{sto:isto} with the possible exception of \eqref{isto:cc}. Infeasible-point MPCC-LICQ \cite{Hu2004} holds at $\hat{\xi}$ if the following gradients are linearly independent at $\hat{\xi}$ :
\begin{subequations}
\begin{alignat}{2}
&\nabla_\xi \big(x_1 - \bar{x}_1\big), \label{licq:x1}\\
&\nabla_\xi \big(x_{k+1} - F(x_k, u_k, v_k, w_k)\big),&&\hspace{0.5cm} \forall k \in [M], \label{licq:xk}\\
&\nabla_\xi \big(\sum_{k \in I'} w_k + e_k - \D \big),&&\hspace{0.5cm} \forall I' \in \hat{\I}(v),  \label{licq:dtc}\\
&\nabla_\xi \big(\sum_{k=1}^M w_k - t_f\big), \label{licq:wsum}\\
&\nabla_\xi w_i, \;\;  k: w_k = 0 , &&\hspace{0cm}\nabla_\xi e_k, \;\; k : e_k = 0, \label{licq:we1}\\ 
&\nabla_\xi w_i,  \;\; k : e_k, w_k > 0 , &&\hspace{0cm}\nabla_\xi e_k, \;\; k : e_k, w_k > 0, \label{licq:we2}
\end{alignat}
\end{subequations}
where $\nabla_\xi (\cdot)$ is the gradient (Jacobian in cases of \eqref{licq:x1} and \eqref{licq:xk}) with respect to $\xi = (x, u, w, e)$, and $\hat{\I}(v) \subset \I(v)$ is the set of segments whose MDT inequality constraint has become active. { The infeasible-point MPCC-LICQ reduces to the usual MPCC-LICQ if $\hat{\xi}$ is feasible for \eqref{sto:isto}.}
\end{definition}
\begin{lemma}
For a $v \in [n]^M$, let the solution of $\iPhi^\gamma(v)$ converge to { a limit point} $\hat{\xi}$ as $\gamma \rightarrow \infty$. If the infeasible-point MPCC-LICQ holds at $\hat{\xi}$, then $\hat{\xi}$ is a feasible point for \eqref{sto:isto}. Moreover, $\hat{\xi}$ is a C-stationary point of \eqref{sto:isto}.
\label{mpcc-licq}
\end{lemma}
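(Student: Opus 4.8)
The plan is to work along a sequence $\gamma_j\to\infty$ with $\xi^{\gamma_j}:=(x^{\gamma_j},u^{\gamma_j},w^{\gamma_j},e^{\gamma_j})$ a minimizer of $\iPhi^{\gamma_j}(v)$ converging to $\hat\xi$, and to split the argument into two parts: first that $\hat\xi$ is feasible for \eqref{sto:isto}, then that the KKT multipliers of the penalized problems stay bounded and yield, in the limit, the C-stationarity system of \eqref{sto:isto}. I assume $f$, $F$, $L$, $E$ are $C^1$ and $\gamma_0>0$ is fixed, so each $\iPhi^\gamma(v)$ is a smooth NLP.

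For feasibility, every constraint of \eqref{sto:isto} except the complementarity \eqref{isto:cc} holds at each $\xi^{\gamma_j}$ and is a closed condition on continuous data, hence passes to the limit; only \eqref{isto:cc}, which the penalty does not enforce, needs work. Here I use that \eqref{sto:isto} is feasible under Assumption~\ref{assum:dwell}: keep one mode active for the whole horizon (which covers any segment containing it since $t_f>\D$) and use the slacks $e$ (set to $\D$) for the remaining dwell-time inequalities, keeping $e\perp w$. Evaluating the penalized objective at such a point gives a bound $B$ independent of $\gamma$, so $\iPhi^{\gamma_j}(v)\le B$ and $\gamma_j (e^{\gamma_j})^\top w^{\gamma_j}=\iPhi^{\gamma_j}(v)-\big[\sum_k L_k+E+\gamma_0\|e\|^2\big](\xi^{\gamma_j})\le B-A+o(1)$, where $A$ is the value of the non-penalty part at $\hat\xi$. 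Since this quantity is nonnegative and $\gamma_j\to\infty$, $(e^{\gamma_j})^\top w^{\gamma_j}\to 0$, so $\hat e^\top\hat w=0$; with $\hat e,\hat w\ge 0$ this is \eqref{isto:cc}. As a by-product, $\gamma_j e_k^{\gamma_j} w_k^{\gamma_j}$ stays bounded for every $k$, which I use below.

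For C-stationarity, write the KKT system of $\iPhi^{\gamma_j}(v)$ at $\xi^{\gamma_j}$ (NLP-LICQ there holds for large $j$, along a further subsequence where the NLP active set is constant, since the active gradients are a subset of the list \eqref{licq:x1}--\eqref{licq:we2} evaluated near $\hat\xi$). Let $\mu^{\gamma_j}$ be the multipliers of \eqref{licq:x1}, \eqref{licq:xk}, \eqref{licq:wsum}, let $\alpha_I^{\gamma_j}\ge 0$ be those of the dwell-time inequalities, and $\beta_k^{\gamma_j},\delta_k^{\gamma_j}\ge 0$ those of $w_k\ge 0$, $e_k\ge 0$. Absorbing the penalty gradients $\partial(\gamma e_k w_k)/\partial w_k=\gamma e_k$ and $\partial(\gamma e_k w_k)/\partial e_k=\gamma w_k$ into the bound multipliers via $\mu^w_k:=\beta_k^{\gamma_j}-\gamma_j e_k^{\gamma_j}$ and $\mu^e_k:=\delta_k^{\gamma_j}-\gamma_j w_k^{\gamma_j}$, the stationarity equation becomes exactly that of the MPCC objective $\Psi:=\sum_k L_k+E+\gamma_0\|e\|^2$ of \eqref{sto:isto} with multipliers $(\mu^{\gamma_j},\alpha^{\gamma_j},\mu^w,\mu^e)$. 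I then show these are bounded: if not, divide stationarity by the norm $N^{\gamma_j}\to\infty$ of the combined multiplier vector and pass to the limit; the $\nabla\Psi/N^{\gamma_j}$ term vanishes, multipliers of inactive dwell-time constraints vanish by complementary slackness, $\mu^w_k/N^{\gamma_j}\to 0$ whenever $\hat w_k>0$ (there $\beta_k^{\gamma_j}=0$ and $|\mu^w_k|=\gamma_j e_k^{\gamma_j}=(\gamma_j e_k^{\gamma_j}w_k^{\gamma_j})/w_k^{\gamma_j}$ is bounded), and symmetrically $\mu^e_k/N^{\gamma_j}\to 0$ whenever $\hat e_k>0$; what remains is a nontrivial null combination of the active gradients \eqref{licq:dtc}, of \eqref{licq:x1}, \eqref{licq:xk}, \eqref{licq:wsum}, and of $\nabla w_k$ for $\hat w_k=0$ and $\nabla e_k$ for $\hat e_k=0$ — all members of the list in Definition~\ref{defn:licq} — contradicting their linear independence. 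With the multipliers bounded, a convergent subsequence gives a stationarity triple $(\mu^\ast,\alpha^\ast,\mu^{w\ast},\mu^{e\ast})$ for \eqref{sto:isto} with $\alpha^\ast\ge 0$ complementary to the dwell-time inequalities; the remaining sign conditions follow index by index. If $\hat w_k>0$ then $e_k^{\gamma_j}=0$ for large $j$ (otherwise the $e_k$-stationarity $2\gamma_0 e_k^{\gamma_j}+\gamma_j w_k^{\gamma_j}=\sum_{I\ni k}\alpha_I^{\gamma_j}$ would force $\alpha^{\gamma_j}\to\infty$), hence $\mu^{w\ast}_k=0$; symmetrically $\mu^{e\ast}_k=0$ when $\hat e_k>0$; and for $k$ with $\hat w_k=\hat e_k=0$ one checks in each of the four sign patterns of $(w_k^{\gamma_j},e_k^{\gamma_j})$ that $\mu^w_k\mu^e_k\ge 0$ already at finite $\gamma_j$ — in particular $\mu^w_k\mu^e_k=\gamma_j^2 e_k^{\gamma_j}w_k^{\gamma_j}\ge 0$ when both are positive — so $\mu^{w\ast}_k\mu^{e\ast}_k\ge 0$ in the limit. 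These are precisely the C-stationarity conditions.

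I expect the multiplier-boundedness step to be the main obstacle: it is the only place the constraint qualification is used, and it requires checking that \emph{every} gradient surviving the normalization belongs to the list of Definition~\ref{defn:licq} — which is exactly why that definition includes the infeasible-point terms \eqref{licq:we2} and only the active dwell-time gradients \eqref{licq:dtc} — together with the bookkeeping of reducing to a subsequence on which the active set is constant. A secondary subtlety is the index-wise sign analysis when $\hat w_k>0$ or $\hat e_k>0$, where one must rule out a lingering positive slack or dwell time in the penalized iterates to obtain exact vanishing of the corresponding MPCC multiplier; this relies on the boundedness of $\alpha^{\gamma_j}$ just established, so the two steps must be carried out in that order. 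Finally, the appearance of C-stationarity rather than the stronger S-stationarity is forced by the biactive case, where $\mu^w_k$ and $\mu^e_k$ are limits of the nonpositive quantities $-\gamma_j e_k^{\gamma_j}$ and $-\gamma_j w_k^{\gamma_j}$, which only guarantees $\mu^{w\ast}_k\mu^{e\ast}_k\ge 0$.
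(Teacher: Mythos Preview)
The paper does not actually prove this lemma: its entire proof is a citation to Hu and Ralph \cite{Hu2004}, Theorem~2.1 and Lemma~3.2. Your write-up is a self-contained reconstruction of that cited result, and on the C-stationarity half you are following the same route as the source the paper invokes: absorb the penalty gradients into the bound multipliers, use the constraint qualification to bound the combined multiplier vector by the normalization-and-contradiction argument, pass to a subsequential limit, and verify the sign and product conditions index by index.

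Where you genuinely depart from Hu--Ralph is in the feasibility step. Their Lemma~3.2 deduces feasibility of the limit \emph{from} the infeasible-point MPCC-LICQ itself; this is precisely why Definition~\ref{defn:licq} carries the extra gradients \eqref{licq:we2} at indices with $\hat w_k,\hat e_k>0$. You instead bypass the CQ entirely by exhibiting an explicit MPCC-feasible point---one mode active for the full horizon, slacks elsewhere---via Assumption~\ref{assum:dwell}, and using its objective value as a $\gamma$-uniform upper bound on $\iPhi^\gamma(v)$ to force $(e^{\gamma_j})^\top w^{\gamma_j}\to 0$. This is more elementary and tailored to the problem, and it gives you the bonus that each $\gamma_j e_k^{\gamma_j}w_k^{\gamma_j}$ stays bounded, which you reuse in the multiplier-boundedness step. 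The price is that your bound requires the $\xi^{\gamma_j}$ to be global minimizers (or at least to have objective below a fixed level), whereas Hu--Ralph's CQ-based argument applies already to stationary points of the penalized problems; since the paper writes $\iPhi^\gamma(v)$ as a $\min$, your reading is consistent with it. A small side-effect of establishing feasibility first is that the infeasible-point gradients \eqref{licq:we2} never actually enter your contradiction argument---once $\hat\xi$ is feasible the corresponding index set is empty---so you are effectively using only ordinary MPCC-LICQ there, despite your closing remark that the argument explains why \eqref{licq:we2} appears in the definition.
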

\begin{proof}
See Hu and Ralph \cite{Hu2004}, Theorem 2.1 and Lemma 3.2.
\end{proof}

Using Lemma~\ref{mpcc-licq}, Theorem~\ref{thm:convergence} shows that under the infeasible-point MPCC-LICQ, the ISTO algorithm terminates in finite steps, and it ends with a solution that satisfies the MDT constraints {up to any given accuracy.}
\begin{theorem}
\label{thm:convergence}
For a $v \in [n]^M$, let the solution of $\iPhi^\gamma(v)$ converge to { a limit point}. { If the infeasible-point} MPCC-LICQ holds at this limit point, Algorithm~\ref{isto} terminates after $m < \infty$ number of iterations with a sequence $v^* := v^{(m)}$ { with $w_k^* > \epsilon$, and $e_k^* < \epsilon$ for any mode $k$ and any given $\epsilon > 0$}.
\end{theorem}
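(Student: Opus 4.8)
The plan is to establish termination by combining two facts: each outer iteration of Algorithm~\ref{isto} (each increment of $i$ on line~\ref{isto:i}) strictly shortens the sequence, and the inner loop eventually stops inflating $\gamma$. First I would observe that the sequence length $M^{(i)} := |v^{(i)}|$ is a nonincreasing integer that strictly decreases whenever line~\ref{isto:v} is executed, because $\K = \{k \mid w_k^* > \epsilon\}$ is a proper subset of $[M^{(i)}]$ exactly when the triggering condition $\{k \mid w_k^* \leq \epsilon\} \neq \emptyset$ holds. Since $M^{(i)} \geq 1$ (or $\geq$ the number of modes forced by feasibility under Assumption~\ref{assum:dwell}), only finitely many outer iterations can occur; call the last one $m$. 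What remains is to show that for a fixed sequence $v^{(m)}$ the inner \textbf{while} loop terminates, i.e., the exit test on line~\ref{isto:end} is eventually met without ever reaching line~\ref{isto:v} again.

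For that, I would invoke Lemma~\ref{mpcc-licq}: with $v = v^{(m)}$ fixed and $\gamma \to \infty$, the solutions of $\iPhi^\gamma(v^{(m)})$ converge to a limit point $\hat{\xi} = (\hat{x},\hat{u},\hat{w},\hat{e})$ which, under the infeasible-point MPCC-LICQ assumption, is feasible for \eqref{sto:isto}; in particular the complementarity \eqref{isto:cc} holds, so $\hat{e}_k \hat{w}_k = 0$ for every $k$, and all dwell times satisfy $\hat{w}_k \geq 0$. Moreover, because $v^{(m)}$ was produced by deleting every mode with $w_k^* \leq \epsilon$ at the previous stage and no further mode is deleted, I would argue that along the tail of the inner iterations the active modes of $v^{(m)}$ keep strictly positive dwell time — the limit point $\hat{\xi}$ has $\hat{w}_k > 0$ for all $k \in [M^{(m)}]$, since a zero limiting dwell time would have triggered another deletion and contradicted maximality of $m$. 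Consequently, by complementarity, $\hat{e}_k = 0$ for all $k$. By convergence of $(w^*, e^*)$ to $\hat{\xi}$, for $\gamma$ large enough we have $w_k^* > \epsilon$ and $e_k^* < \epsilon$ simultaneously for all $k$, which is precisely the negation of the loop condition on line~\ref{isto:end}; hence the algorithm ends, and it ends with exactly the claimed property $w_k^* > \epsilon$, $e_k^* < \epsilon$.

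To make the bookkeeping clean I would phrase the inner-loop argument as: suppose, for contradiction, the inner loop for $v^{(m)}$ never exits; then line~\ref{isto:gamma} is executed infinitely often, $\gamma = \theta^j \gamma_{\text{init}} \to \infty$, and the returned iterates converge to $\hat{\xi}$; feasibility of $\hat{\xi}$ plus the no-further-deletion property force $\hat{w}_k > 0$ and $\hat{e}_k = 0$ for all $k$, so eventually the test $\{k \mid w_k^* \leq \epsilon \vee e_k^* \geq \epsilon\} = \emptyset$ holds, contradicting non-exit. The one subtlety worth flagging is the interplay between the two nested conditions: the loop can also take the branch that re-enters with the \emph{same} $v^{(i)}$ after bumping $\gamma$ (when $\{k \mid w_k^* \leq \epsilon\} = \emptyset$ but $\{k \mid e_k^* \geq \epsilon\} \neq \emptyset$), so I must ensure that this branch cannot cycle forever either — but that is exactly the $\hat{e}_k = 0$ conclusion above, which kills the $e_k^* \geq \epsilon$ part of the test for large $\gamma$.

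The main obstacle I expect is justifying rigorously that the limit point $\hat{\xi}$ of the final stage has \emph{all} dwell times strictly positive (equivalently, that no further mode collapse is "pending in the limit" at stage $m$). Algorithm~\ref{isto} only deletes a mode when $w_k^* \leq \epsilon$ at a finite $\gamma$; it is conceivable a priori that $w_k^* > \epsilon$ for every finite $\gamma$ yet $w_k^* \to \hat{w}_k = 0$. I would close this gap by noting that if $\hat{w}_k = 0$ then by convergence $w_k^* \to 0 < \epsilon$, so the test $\{k \mid w_k^* \leq \epsilon\} \neq \emptyset$ fires at some finite $\gamma$, line~\ref{isto:v} executes, and stage $m$ was not the last — contradiction. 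Thus the strict positivity of all $\hat{w}_k$ is forced precisely by the definition of $m$ as the index of the last outer iteration, and with Lemma~\ref{mpcc-licq} supplying feasibility and complementarity of $\hat{\xi}$, the termination with the stated accuracy guarantee follows.
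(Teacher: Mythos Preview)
Your proof is correct and follows essentially the same strategy as the paper's: first bound the number of outer iterations by the strictly decreasing sequence length, then derive a contradiction for the inner loop at the final stage $v^{(m)}$ by invoking Lemma~\ref{mpcc-licq} to force complementarity at the limit. The only cosmetic difference is that the paper works directly with the product $e_k^* w_k^* > \epsilon^2$ to contradict feasibility of the limit point, whereas you separate this into $\hat{w}_k > 0 \Rightarrow \hat{e}_k = 0$ and then pass back to finite $\gamma$.
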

\begin{proof}
{ Let $\epsilon > 0$ be a small arbitrary number.} Suppose the algorithm does not terminate. This leads to either indefinite execution of line~\ref{isto:v} or line~\ref{isto:gamma} or both. Line~\ref{isto:v} reduces the dimension of $v$ as $\one_\K$ always removes at least one row. Since $\v$ { has} finite dimensions, Line~\ref{isto:v} cannot be executed indefinitely. Therefore, $i$ will reach a maximum $m < \infty$, and the sequence will remain fixed at $v^{(m)}$ of length $\tilde{M}$. Subsequently, line~\ref{isto:gamma} will be executed indefinitely for this sequence. This means $\gamma \rightarrow \infty$. On the one hand, since the algorithm does not terminate, there exists a $k \in [\tilde{M}]$, such that { $w^*_k \leq \epsilon$ or $e^*_k \geq \epsilon$}. On the other hand, since line~\ref{isto:v} is not executed, { $w_k > \epsilon$} for all $k \in [\tilde{M}]$. It follows that there exists a $k \in [\tilde{M}]$ such that { $e_k^* \geq \epsilon$}. In conclusion, { if the algorithm does not terminate,} there is a $k \in [\tilde{M}]$ such that { $e^*_k w^*_k > \epsilon^2 > 0$}, regardless of how large $\gamma$ is. {However, assuming that the limit point of $\iPhi^\gamma(v^{(m)})$ satisfies the infeasible-point MPCC-LICQ, it is a feasible point of $\iPhi(v^{(m)})$ by Lemma~\ref{mpcc-licq}. Therefore, for any $\epsilon>0$, there exists a $\gamma$ such that $e_k^*w_k^* < \epsilon^2$.} { It follows}, by contradiction, that the algorithm terminates. Furthermore, the algorithm terminates only if ${\{  w^*_k \leq \epsilon \vee e^*_k \geq \epsilon} \}= \emptyset$, i.e., there exists a maximum iteration $m$ and a sequence $v^{(m)} =: v^*$, such that for any of its modes $k$, { $w^*_k > \epsilon$ and $e^*_k < \epsilon$}. 
\end{proof}

\begin{remark}
Theorem \eqref{thm:convergence} assumes that the infeasible-point MPCC-LICQ holds at a point $\hat{\xi}$ that ISTO algorithm converges to, should it fail to terminate. In this research, we will not attempt to prove this assumption. However, note that gradients \eqref{licq:x1} \eqref{licq:xk} are linearly independent by structure, and \eqref{licq:dtc} to \eqref{licq:we2} only consist of ones and zeros for rows corresponding only to $w$ and $e$. Therefore, the investigation of MPCC-LICQ is limited only to $w$ and $e$ vectors appearing in \eqref{licq:dtc} to \eqref{licq:we2}.
\end{remark}
\begin{remark}
In this paper, state constraints have been omitted in the formulations of STO. Although $\Phi(v)$ can include state constraints, the termination of the algorithm can no longer be guaranteed.
\end{remark}

\section{NUMERICAL RESULTS}
\label{sec:results}

In this section, we apply the developed MINLP formulation \eqref{sto:binary:complete}, with $b$ as the binary variables to be optimally selected using branch-and-bound, and the ISTO algorithm to four switched systems with minimum dwell time constraints. Three of these are benchmark problems \cite{lee_benchmark_2012}: the Double Tank System (DTS), the Lotka-Volterra fishing binary problem (LVF), and the Van der Pol oscillator (VDP). The fourth system is a simple linear system described by:
\begin{subequations}
\begin{align}
\min_{\substack{x, \mathrm{v}}} & \int_0^{10} \Big(x_1(t) - \big(0.5\sin(t) + 1\big) \Big)^2 dt,  \\ 
\text{s.t.}\hspace{0.9cm}  \dot{x}(t) &=
\begin{bmatrix}
0 & 1\\
0 & 0
\end{bmatrix} x(t) + 
\begin{bmatrix}
0 \\
1
\end{bmatrix} \mathrm{v}(t), \\
x(0) &= \bar{x}_0,  
\end{align}
\end{subequations}
with $\bar{x}_0 = [0, \; 0]^\top$, $x(t) = [x_1(t), \;x_2(t)]^\top \in \R^{2}$, and $\mathrm{v}(t) \in \{0, 1, -1\}$ for almost all $t \in [0, 10]$. We will refer to it as the Particle Trajectory (TRJ) problem.

Three approaches are compared: The Combinatorial Integral Approximation (CIA) \cite{sager_combinatorial_2011}, the MINLP formulation of \eqref{sto:binary:complete} and ISTO. To obtain a lower bound, a relaxation of the optimal control problem of system \eqref{originalsys} is performed by allowing the discrete control input to take on continuous values from the convex hull of the set of discrete values, i.e., $\mathrm{v}(t) \in \{-1, 0, 1\}$ is relaxed to $\mathrm{v}(t) \in [-1, 1]$. This transforms the original optimal control problem into a continuous optimization problem. The MDT constraints do not apply to the relaxed problem.

CIA starts by solving the relaxed problem, written in the outer convexification form \cite{sager_numerical_2005}. It then approximates the continuous input values by discrete values, considering the MDT constraints. This step is performed using {\ttfamily pycombina}, which implements a tailored branch-and-bound algorithm\cite{burger_pycombina_2020}. Afterward, by fixing the discrete values, the optimization problem is resolved. The mixed integer problem of $\min_{b \in \{0,1\}^{\M}} \phi(b)$ with $\phi(b)$ defined in \eqref{sto:binary:complete} is solved using the solver Bonmin \cite{Bonmin}, which employs a branch-and-bound algorithm. The continuous optimization problems within Algorithm~\ref{isto} were solved using the interior point solver IPOPT \cite{wachter_implementation_2006}.

None of the problems include state constraints, and a minimum dwell time constraint is imposed on all modes. The master sequence for all problems consists of 10 elements, and it is generated by iterating over the possible discrete values of the input. Note that this means that the only integer variable is the vector $b \in \{0,1\}^{10}$, regardless of the number of discretization nodes.  The master sequence and the MDT constraint lower bound values are collected in the table below:
\begin{table*}[!ht]%
\centering %
\begin{tabular*}{0.4\textwidth}{@{\extracolsep\fill}ccc@{\extracolsep\fill}}
\toprule
\textbf{Name} & \textbf{Master Sequence}  & \textbf{$\D$ } \\
\midrule
DTS & $[1, 2, 1, 2, 1, 2, 1, 2, 1, 2]$ & 0.5\\
LVF & $[0, 1, 0, 1, 0, 1, 0, 1, 0, 1]$ & 0.5\\
VDP & $[-2, -1, 0.75, -2, -1, 0.75, -2, -1, 0.75, -2]$ & 1\\
TRJ & $[-1, 0, 1, -1, 0, 1,-1, 0, 1, -1]$ & 0.5\\
\bottomrule
\end{tabular*}
\end{table*}

Time discretization is performed using multiple shooting and an explicit Runge-Kutta of order 4 as the integrator. Nodes are distributed across modes according to the initial dwell time values, with the goal of keeping the initial lengths of discretization intervals within each mode similar. The initial dwell time can be set uniformly across all modes, or in the case of ISTO, be determined by the solution from the previous iteration. The continuous control inputs are assumed to remain constant over each time discretization node. 

Since the integration step in STO depends on the dwell time vector $w$, while CIA is discretized on a fixed time grid, to ensure a fair comparison of the objective function, all systems are forward-simulated using the calculated optimal control input on the same time grid using the same integrator. The objective function is then computed based on the simulation. The problems are solved for a range of number of time discretization nodes, and the results are plotted in Fig.~\ref{fig:dts-trj}. The top plot shows the optimal value of the objective function and the bottom plot displays the single-core~\footnote{With CPU specification: Intel® Core™ i5-1245U × 12, and 16 GB RAM.} computation time ($\mathrm{t_{proc}}$) as a function of time discretization nodes $N$. Each problem instance is solved 10 times, and the average is reported. The results demonstrate the efficiency of Bonmin in solving \eqref{sto:binary:complete} even for a large number of nodes. This is because the number of binary variables is fixed at 10. The ISTO algorithm provides similar solutions to Bonmin but with significantly lower computation times. Due to the nonconvexity of the STO formulation, in some cases, small discrepancies are observed between the objective values of Bonmin and ISTO. Lastly, although CIA, as an integer approximation method, is faster than ISTO, the optimality gap due to the minimum dwell time constraints results in solutions with higher objective values.
\begin{figure}[h!]
\centering
\includegraphics[width=0.23\textwidth, trim = 0 0.8cm 2cm 1.5cm, clip]{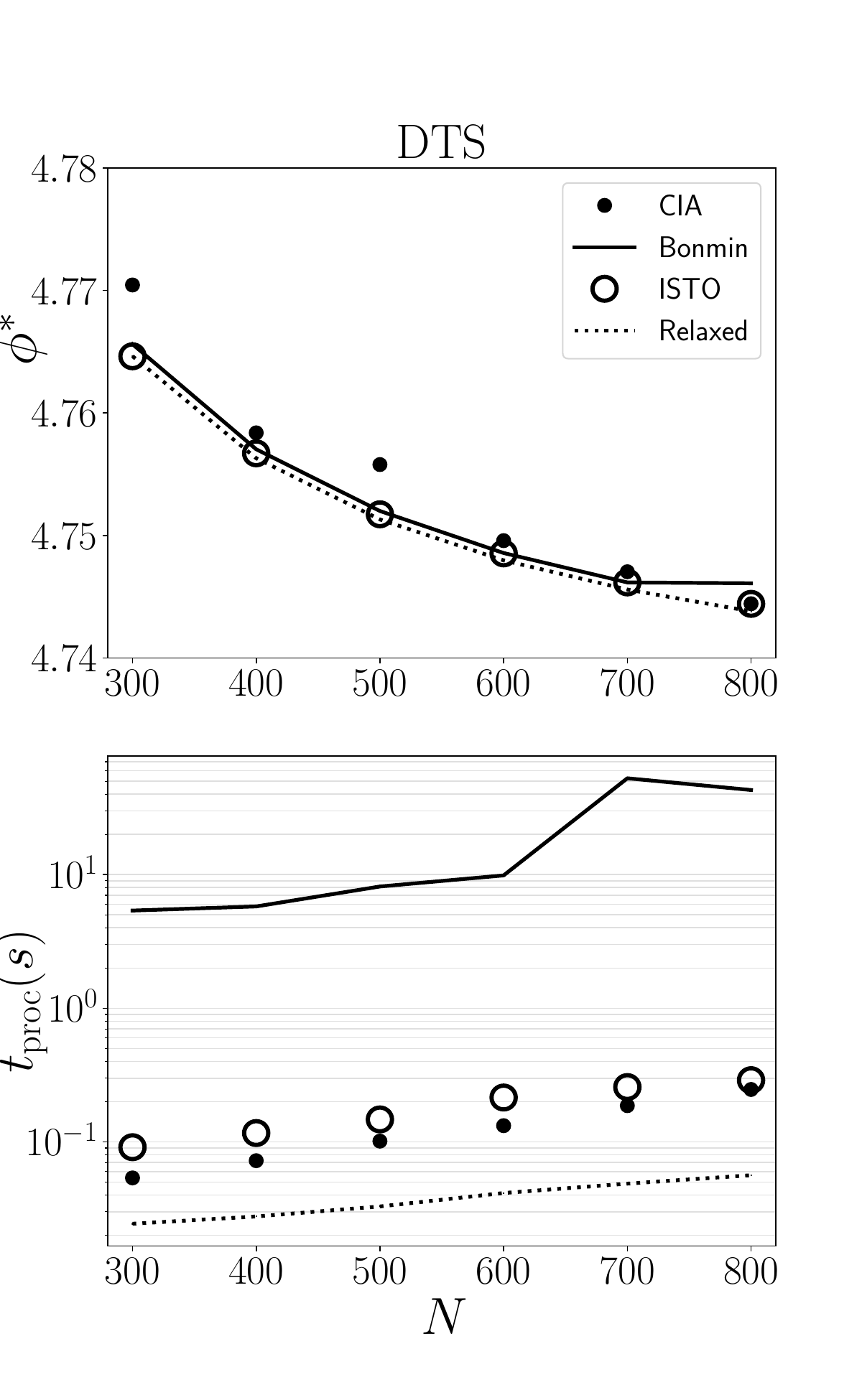}
\includegraphics[width=0.23\textwidth, trim = 0 0.8cm 2cm 1.5cm, clip]{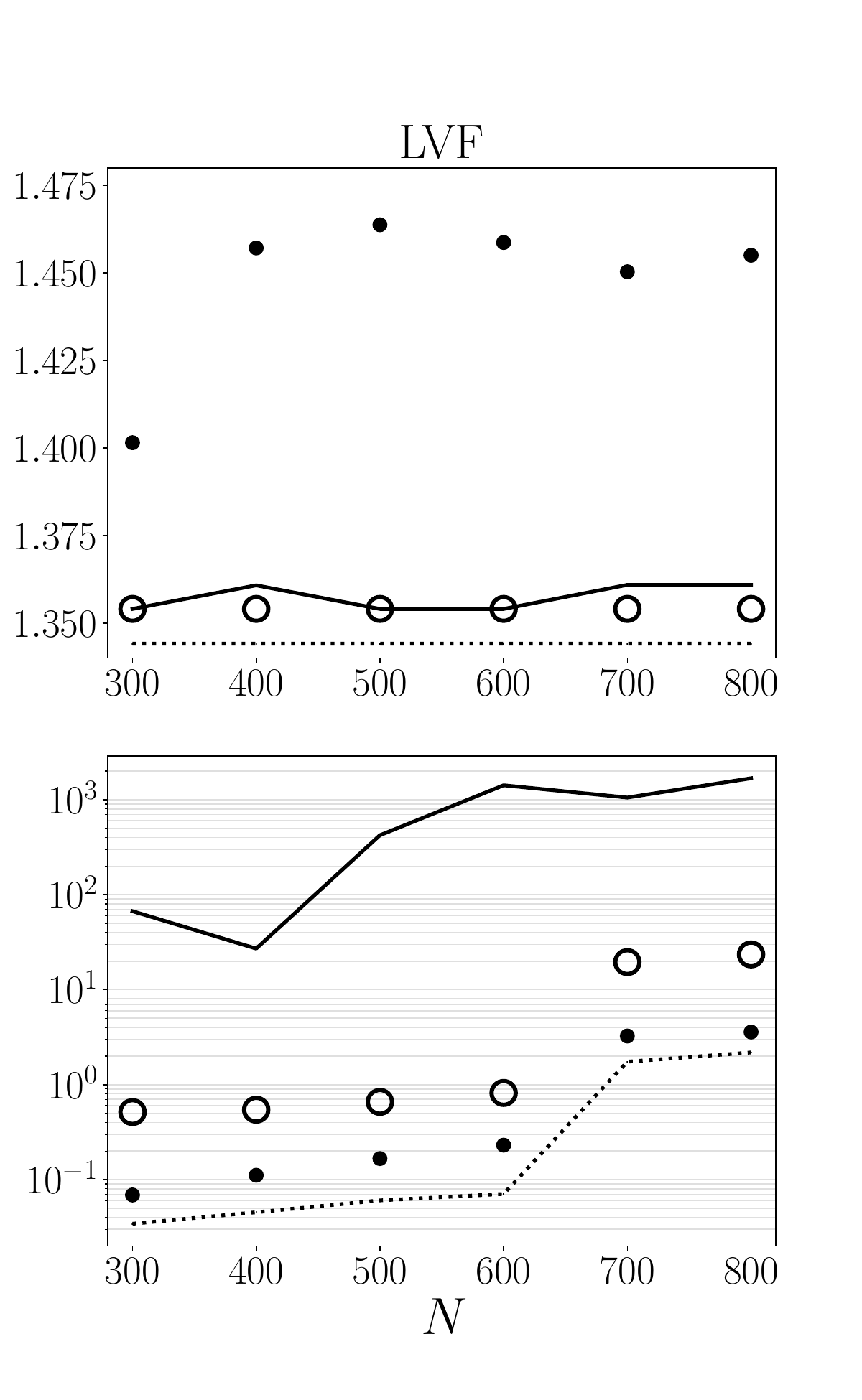}
\includegraphics[width=0.23\textwidth, trim = 0 0.8cm 2cm 1.5cm, clip]{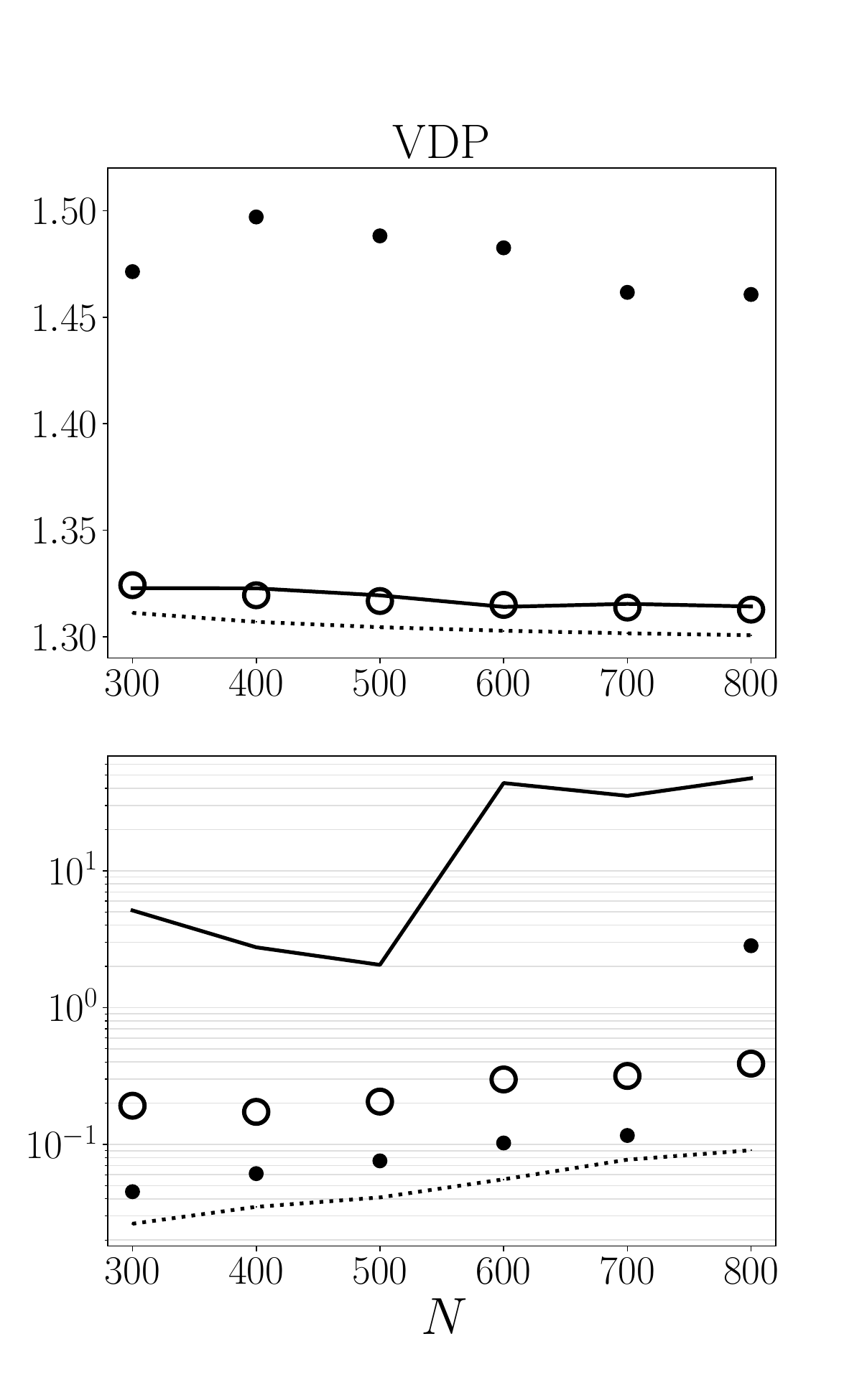}
\includegraphics[width=0.23\textwidth, trim = 0 0.8cm 2cm 1.5cm, clip]{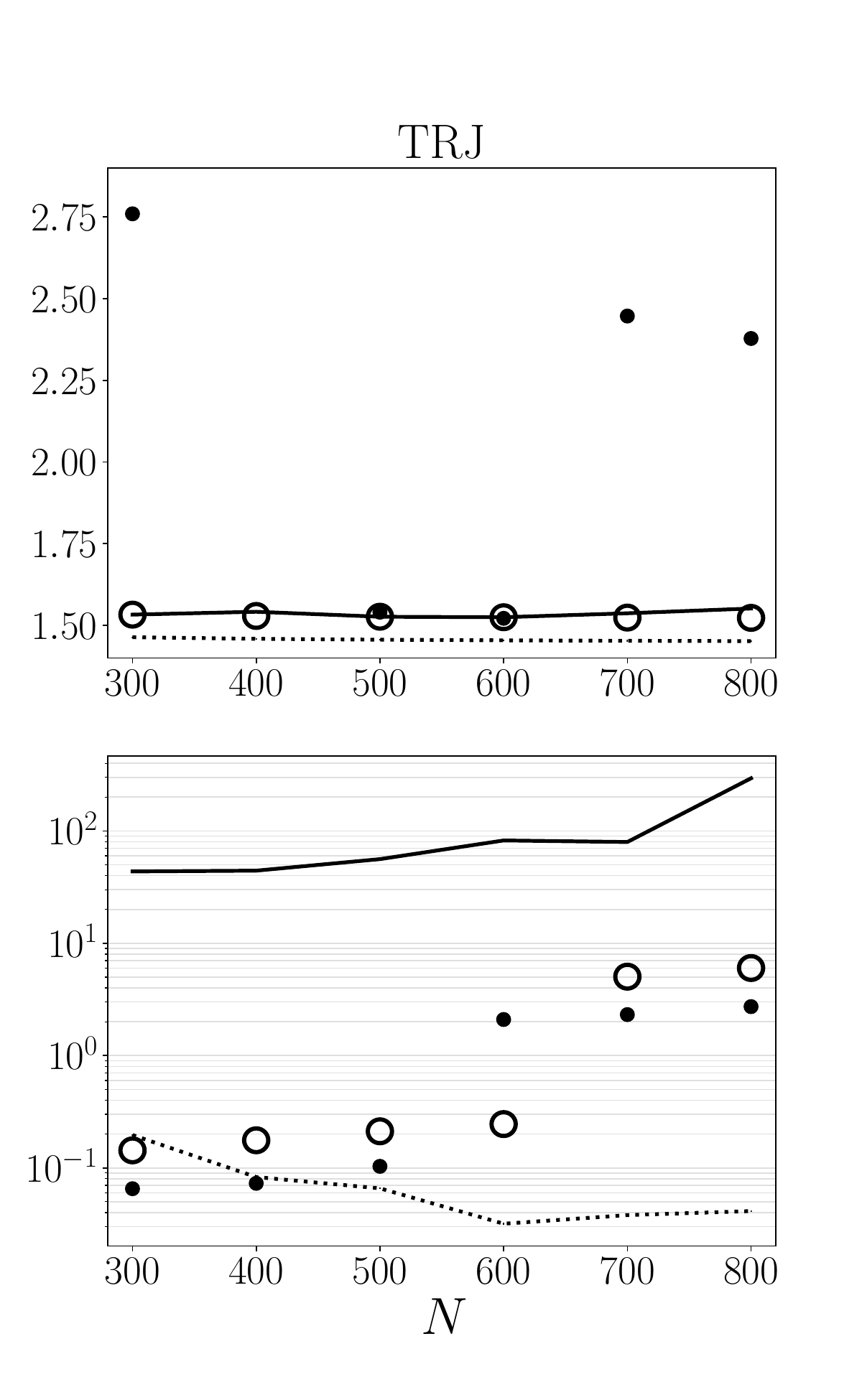}
\caption{From left to right: the Double Tank problem, the Lotka Volterra Fishing binary problem, the Van Der Pol oscillator, and the Particle Trajectory problem.}
\label{fig:dts-trj}
\end{figure}

\section{CONCLUSIONS}
We presented a MINLP formulation for systems with discrete control inputs under minimum dwell time constraints. The problem was decomposed into a Switching Time Optimization (STO) part and a Sequence Optimization (SO). The feasible set of SO was limited to subsequences of a master sequence, and minimum dwell time (MDT) constraints were incorporated into STO. This resulted in a MINLP formulation that could be solved by a branch-and-bound solver. Since in this formulation, the number of binaries depends only on the master sequence, it was proved to be an efficient formulation. Moreover, the iterative switching time optimization (ISTO) algorithm was introduced to efficiently solve STO and SO while respecting the MDT constraints. Both approaches were applied to four switched systems and the results show the efficiency of the MINLP formulation and ISTO.


\bibliography{wileyNJD-AMA}

\begin{thebibliography}{10}
\providecommand \doibase [0]{http://dx.doi.org/}%

\bibitem{lee_benchmark_2012}
Sager S. A {Benchmark} {Library} of {Mixed}-{Integer} {Optimal} {Control}
  {Problems}. In:  Lee J, Leyffer S. \kern-2pt, eds. {\it Mixed {Integer}
  {Nonlinear} {Programming}}, , . 154. New York, NY: Springer New York,
  2012\string:631--670

\bibitem{belotti_mixed-integer_2013}
Belotti P, Kirches C, Leyffer S, Linderoth J, Luedtke J, Mahajan A.
  Mixed-integer nonlinear optimization. {\it Acta Numerica.}
  2013\string;22\string:1--131.
\newblock \href {\doibase 10.1017/S0962492913000032} {doi:
  10.1017/S0962492913000032}

\bibitem{sager_combinatorial_2011}
Sager S, Jung M, Kirches C. Combinatorial integral approximation. {\it
  Mathematical Methods of Operations Research.}
  2011\string;73(3)\string:363--380.
\newblock \href {\doibase 10.1007/s00186-011-0355-4} {doi:
  10.1007/s00186-011-0355-4}

\bibitem{manns_relaxed_2021}
Manns P. Relaxed Multibang Regularization for the Combinatorial Integral
  Approximation. {\it SIAM Journal on Control and Optimization.}
  2021\string;59(4)\string:2645-2668.
\newblock \href {\doibase 10.1137/20M1377187} {doi: 10.1137/20M1377187}

\bibitem{bestehorn_combinatorial_2022}
Bestehorn FM. {\it Combinatorial algorithms and complexity of rounding problems
  arising in {Mixed}-{Integer} {Optimal} {Control}}. PhD thesis. Technische
  Universitat Carolo-Wilhelmina zu Braunschweig,  2022.

\bibitem{zeile_mixed-integer_2021}
Zeile C, Robuschi N, Sager S. Mixed-integer optimal control under minimum dwell
  time constraints. {\it Mathematical Programming.}
  2020\string;188(2)\string:653–694.
\newblock \href {\doibase 10.1007/s10107-020-01533-x} {doi:
  10.1007/s10107-020-01533-x}

\bibitem{abbasi-esfeden_iterative_2023}
Abbasi-Esfeden R, Van~Roy W, Swevers J. Iterative {Switching} {Time}
  {Optimization} for {Mixed}-integer {Optimal} {Control} {Problems}. In: IEEE
  2023\string:1--6

\bibitem{stellato_second-order_2017}
Stellato B, Ober-Blobaum S, Goulart PJ. Second-{Order} {Switching} {Time}
  {Optimization} for {Switched} {Dynamical} {Systems}. {\it IEEE Transactions
  on Automatic Control.} 2017\string;62(10)\string:5407--5414.
\newblock \href {\doibase 10.1109/TAC.2017.2697681} {doi:
  10.1109/TAC.2017.2697681}

\bibitem{axelsson_gradient_2008}
Axelsson H, Wardi Y, Egerstedt M, Verriest EI. Gradient {Descent} {Approach} to
  {Optimal} {Mode} {Scheduling} in {Hybrid} {Dynamical} {Systems}. {\it Journal
  of Optimization Theory and Applications.} 2008\string;136(2)\string:167--186.
\newblock \href {\doibase 10.1007/s10957-007-9305-y} {doi:
  10.1007/s10957-007-9305-y}

\bibitem{wardi_controlled_precision_2012}
Wardi Y, Egerstedt M, Twu P. A controlled-precision algorithm for
  mode-switching optimization. In:  2012\string:713-718

\bibitem{xuping_xu_optimal_2000}
{Xuping Xu} , Antsaklis P. Optimal control of switched systems: new results and
  open problems. In: IEEE 2000; Chicago, IL, USA\string:2683--2687 vol.4

\bibitem{ali_optimal_2016}
Ali U. {\it Optimal control of constrained hybrid dynamical systems: {Theory},
  computation and applications}. PhD thesis. Georgia Institute of Technology,
  Atlanta GA,  2016.

\bibitem{Clausen2003BranchAB}
Clausen J. {\it Parallel Branch and Bound --- Principles and Personal
  Experiences}\string:239--267; Boston, MA: Springer US .
\newblock 1997

\bibitem{Hu2004}
Hu XM, Ralph D. Convergence of a Penalty Method for Mathematical Programming
  with Complementarity Constraints. {\it Journal of Optimization Theory and
  Applications.} 2004\string;123(2)\string:365-390.
\newblock \href {\doibase 10.1007/s10957-004-5154-0} {doi:
  10.1007/s10957-004-5154-0}

\bibitem{sager_numerical_2005}
Sager S. {\it Numerical methods for mixed–integer optimal control problems}.
  PhD thesis. Universitat Heidelberg,  2005.

\bibitem{burger_pycombina_2020}
Bürger A, Zeile C, Hahn M, Altmann-Dieses A, Sager S, Diehl M. pycombina: {An}
  {Open}-{Source} {Tool} for {Solving} {Combinatorial} {Approximation}
  {Problems} {Arising} in {Mixed}-{Integer} {Optimal} {Control}. {\it
  IFAC-PapersOnLine.} 2020\string;53(2)\string:6502--6508.
\newblock \href {\doibase 10.1016/j.ifacol.2020.12.1799} {doi:
  10.1016/j.ifacol.2020.12.1799}

\bibitem{Bonmin}
Bonami P, Biegler LT, Conn AR, et al. An algorithmic framework for convex mixed
  integer nonlinear programs. {\it Discrete Optimization.}
  2008\string;5(2)\string:186-204.
\newblock In Memory of George B. Dantzig\href {\doibase
  https://doi.org/10.1016/j.disopt.2006.10.011} {doi:
  https://doi.org/10.1016/j.disopt.2006.10.011}

\bibitem{wachter_implementation_2006}
Wächter A, Biegler LT. On the implementation of an interior-point filter
  line-search algorithm for large-scale nonlinear programming. {\it
  Mathematical Programming.} 2006\string;106(1)\string:25--57.
\newblock \href {\doibase 10.1007/s10107-004-0559-y} {doi:
  10.1007/s10107-004-0559-y}

\end{thebibliography}




\end{document}